\documentclass[reqno,12pt]{amsart}
\usepackage{enumitem,stix}
\usepackage{epsf}
\usepackage[pagebackref]{hyperref}
\usepackage{mathtools}
\usepackage{ytableau}
\usepackage{todonotes}
\usepackage{verbatim}
\usepackage{amsmath}
\usepackage{amsopn}
\usepackage{latexsym}
\usepackage{amsfonts}
\usepackage{amsthm}
\usepackage{tikz}
\usepackage[foot]{amsaddr}
\usepackage[bottom]{footmisc}

\newtheorem{thm}{Theorem}[section]
\newtheorem{prop}[thm]{Proposition}
\newtheorem{lemma}[thm]{Lemma}
\newtheorem{fact}[thm]{Fact}

\theoremstyle{definition}

\newtheorem{rem}[thm]{Remark}

\newtheorem{example}[thm]{Example}

\newtheorem{definition}[thm]{Definition}

\DeclareMathOperator\SYT{SYT}

\DeclareMathOperator\Des{Des}

\newcommand{\sn}{{\mathcal S}_n}
\newcommand{\C}{{\mathcal C}}
\newcommand{\cmu}{{\mathcal C}_{\mu}}
\title{Major index on involutions}
\author{Eli Bagno}
\address{Department of Applied Mathematics, Jerusalem College of Technology, 21 Havaad Haleumi St., Jerusalem, Israel}
\email{bagnoe@g.jct.ac.il}
\author{Yisca Kares}
\address{Department of Mathematics, Bar-Ilan University, Ramat-Gan 52900, Israel}
\email{yisca95@gmail.com}

\begin{document}
\footnotetext{The second author partially supported by the Israel Science Foundation,grantno.1970/18}

\maketitle

\begin{abstract}
We find the range of the major index on the various conjugacy classes of involutions in the symmetric group $S_n$. 
In addition to indicating the minimum and the maximum values, we show that except for the case of involutions without fixed points, all the values in the range are attained. For the conjugacy classes of involutions without fixed points, we show that the only missing values are one more than the minimum and one less than the maximum.
\end{abstract}

\section{Introduction}

Let $\sn$ be the symmetric group on $n$ elements, let $\lambda=(\lambda_1,\dots,\lambda_k)$ be a partition of $n$ which will be identified with its Young diagram of the shape $\lambda$, and let $SYT(\lambda)$ denote the 
set of all standard Young tableaux of shape $\lambda$. We say that $i \in \{1,\dots,n-1\}$ is a {\it descent} of a permutation $\pi \in \sn$ if $\pi(i)>\pi(i+1)$. 

The major index of a permutation $\pi \in \sn$ , $maj(\pi)$, is the sum of the descents of $T$. 
Likewise, if $T\in SYT(\lambda)$ is a standard tableaux of shape $\lambda$ then $i \in \{1,\dots,n-1\}$ is a {\it descent} of $T$ if $i+1$ is located in $T$ in a row lower than $i$.  Again, the major index, $maj(T)$, is the sum of the descents. See a more elaborated exposition of these concepts in Section \ref{Background}. \\ 
Owing to the RSK algorithm which associates a pair of standard tableaux of the same 
shape to every permutation $\pi\in \sn$, these two versions of the major index are tightly connected. 

The generating function of the major index over the entire group ${\mathcal{S}}_n$ is known since the days of McMahon 
\cite{Mc} who showed that the distribution of the major index on all permutations in $\sn$  is the same as the distribution of inversions. Explicitly:
 $$\sum\limits_{\pi \in  \sn}q^{maj(\pi)}=[n]_q!$$ where $[n]_q=\frac{q^n-1}{q-1}$ and $[n]_q!=[n]_q\cdot [n-1]_q \cdots [1]_q$. 

On the other hand, the generating function $$SYT(\lambda)^{maj}(q):=\sum\limits_{T \in SYT(\lambda)}q^{maj(T)}=
\sum_{k \geq 0}m_{\lambda,k}q^k$$ has two elegant closed forms: one due to Steinberg \cite{Ste}, based on dimensions of irreducible representations of $GL_n(\mathbb{F}_q)$, and the other is based on Stanely's generalization of the hook formula \cite{Sta79}. It reads:
$$SYT(\lambda)^{maj}(q)=\frac{q^{b(\lambda)}[n]_q!}{\Pi_{c\in \lambda}[h_c]_q}$$

where $h_c$ is the hook length of the cell $c$ and $b(\lambda)=\sum\limits_{i > 0}(i-1)\lambda_i$.
The sequence $\{m_{\lambda,k}\}$  is called the {\it fake degree sequence} and has appeared in various algebraic 
and representation-theoretic contexts, such as the degree polynomials of unipotent $GL_n(\mathbb{F}_q)$
-representations due to Green \cite{Green}. 
Another context in which the fake degrees have a significant rule is the coinvariant algebra of $\sn$. 
More broadly, the $k$-th part of the coinvariant algebra of  ${\mathcal{S}}_n$ decomposes into 
irreducible representations, each of them appearing with multiplicity $m_{k,\lambda}$ . 

The range of the major index inside a single shape has been recently explored by Billey, Konvalinka and Swanson in \cite{Sw} (see Theorem 1.1. there).  
We concentrate here on the range of values the major index attains at the conjugacy classes of involutions (i.e. permutations of order less or equal to $2$). This will be done using the RSK correspondence between permutations and pairs of standard Young tableaux, which turns out to be a bijection with standard Young tableaux when restricted to involutions.

Recall that conjugacy classes of involutions are determined by the number of fixed points. According to a result by Sch\"{u}tzenberger \cite{S}, all the involutions of a given conjugacy class with $r$ fixed points are associated by RSK with standard tableaux of shapes having $r$ odd columns. 

The combination of these two results, together with an algorithm that runs through the standard Young tableaux of a fixed number of odd columns enable us to calculate the range of the major index over the entire conjugacy classes.   

The paper is structured as follows: First, we provide some background in the following section, including the definitions of the statistics we work with, the Young diagrams, the Young tableaux, and the Robinson-Schensted correspondence. Additionally, we present some preparatory lemmas in Section \ref{Background} to enhance the readability of our proof of the main theorem. Section \ref{main results} introduces our main theorem and the algorithm we use to demonstrate it, along with the proof's overarching idea. Further, we justify the algorithm's steps in Section \ref{detailed proofs}, where we provide detailed proofs for each step. Finally, we cover some special cases at the end of Section \ref{detailed proofs}.




\section{Background}\label{Background}

\subsection{Statistics on permutations}

We define here two of the well-known statistics on $\sn$, namely, the descent number and the major index. 

\begin{definition}
Let $\pi \in \sn$. The {\em descent set} of $\pi$ is:
$$Des(\pi)=\{i \in \{1,\dots,n-1\} \mid \pi(i)>\pi(i+1)\}.$$  
We also define $des(\pi)=|Des(\pi)|$. 

The {\em major index} of $\pi$ is defined by:
$$maj(\pi)=\sum\limits_{i\in Des(\pi)}{i}.$$
\end{definition}

\begin{example}
Let $\pi=5321476 \in \mathcal{S}_7$. Then $Des(\pi)=\{1,2,3,6\}$, $des(\pi)=4$ and $maj(\pi)=1+2+3+6=12$.  
\end{example}

\subsection{Young diagrams and standard tableaux}

A {\it Young diagram} is a finite collection of cells in the plane, arranged in left-justified rows, such that the row lengths are increasing. The sequence listing the numbers of cells in each row gives a partition $\lambda$ of a non-negative integer $n$.  The Young diagram is said to be of shape $\lambda$.
Also, let $\lambda'$ be the transpose of $\lambda$.
We often use Greek letters to denote the diagrams as well as their shapes (See figure \ref{diagram} for an example).

\begin{figure}[!ht]
\begin{center}

$\lambda=$ \begin{ytableau}
 {} & {} & {} & {}  \\
  {} & {} & {}\\
  {} \\
  {} \\
\end{ytableau}
$\lambda'=$ \begin{ytableau}
 {} & {} & {} & {}  \\
  {} & {}\\
  {} & {} \\
  {} \\
\end{ytableau}

\caption{\label{diagram} $\lambda=(4,3,1,1),\lambda'=(4,2,2,1)$}
\end{center}
\end{figure}

A {\it Young tableau} is a filling of a young diagram a by the numbers $1,\dots,n$. A tableau is {\it standard} if the numbers are increasing through rows and column. (see example in figure \ref{tableau}).  

\begin{figure}[!ht]
\begin{center}

\begin{ytableau}
 {\bf{1}} & {4} & {5} & {\bf{6}}  \\
  {\bf{2}} & {\bf{7}} & {9}\\
  {3} \\
  {8} \\
\end{ytableau}

\caption{\label{tableau} A standard tableau $T$ of shape $\lambda=(4,3,1,1)$, $Des(T)=\{1,2,6,7\},maj(T)=1+2+6+7=16$}
\end{center}
\end{figure}

For a shape $\lambda$, let $\SYT(\lambda)$ be the set
of standard Young tableaux of shape $\lambda$. 
The {\em descent set} of $T$ 
is
\[
\Des(T):=\{i \,\mid  i+1 \text{ appears in a lower row of } T \text{ than } i\}.
\]
Define also the {\it major index} of a standard Young tableau $T$ by $$maj(T)=\sum_{i \in Des(T)}{i}.$$

(See example in figure \ref{tableau}). 
\subsection{The RSK algorithm}
We shall make use of the Robinson-Schensted-Knuth (RSK) correspondence which maps each permutation $\pi \in \sn$ to a pair $(P_{\pi}, Q_{\pi})$ of standard Young tableaux of the same shape $\lambda$. A detailed description can be found, for example, in  \cite[Ch.\ 3.1]{Sagan} or in \cite[Ch.\ 7.11]{St2}.

The following is a crucial property of the RSK correspondence which will be vastly used in this paper.  

\begin{fact}\label{Fact}
For each $\pi\in \sn$ one has
$$Q_{\pi}=P_{\pi^{-1}}.$$ 
\end{fact}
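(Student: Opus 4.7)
The plan is to prove this symmetry of RSK via Fomin's growth-diagram reformulation, where the symmetry between insertion and recording becomes manifest as a geometric reflection.

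First I would encode $\pi$ as the $n \times n$ permutation matrix $M_\pi$ having a single dot in cell $(i,\pi(i))$. Next, following Fomin, I would build a growth diagram by labelling the $(n+1)^2$ lattice corners of the grid by partitions: the bottom and left boundaries all receive the empty partition $\emptyset$, and the partition at each interior corner is then determined from its three already-labelled neighbours together with the dot content of the unit square they bound, via Fomin's local rule.

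The two things to verify are: (i) the chain of partitions along the top boundary (read left-to-right) encodes the successive shapes of $Q_\pi$, while the chain along the right boundary (read bottom-to-top) encodes the successive shapes of $P_\pi$; and (ii) Fomin's local rule is invariant under reflection in the main diagonal of the grid. Claim (i) is proved by induction on the number of dots, checking that each dot insertion corresponds to one Schensted row-insertion step. Claim (ii) is immediate once the rule has been written down explicitly.

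With these two verifications in hand, the statement follows at once: since $M_{\pi^{-1}}$ is the transpose of $M_\pi$, the growth diagram for $\pi^{-1}$ is obtained from that of $\pi$ by diagonal reflection, and this reflection swaps the top and right boundaries, yielding $P_{\pi^{-1}}=Q_\pi$. The main obstacle is the case-by-case verification of the local rule together with its correspondence to Schensted insertion; both are routine but attention-demanding. An alternative approach using Knuth's matrix-ball construction on the biword of $\pi$ yields the same conclusion, but obscures the underlying reflective symmetry which is the real content of the fact.
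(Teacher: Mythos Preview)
The paper does not actually prove this statement: it is stated as a \emph{Fact} with a pointer to standard references (Sagan \cite{Sagan}, Stanley \cite{St2}) and no argument is given. So there is no ``paper's own proof'' to compare against.

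Your growth-diagram approach is correct and is one of the standard modern proofs of this symmetry. The two verifications you isolate are exactly the right ones: that the top and right borders of Fomin's diagram encode $Q_\pi$ and $P_\pi$ respectively, and that the local rule is symmetric under reflection through the diagonal. Once those are in hand, the conclusion is immediate from $M_{\pi^{-1}} = (M_\pi)^t$. The only caveat is that step (i), matching the growth diagram to Schensted insertion, is the substantive part and is more than a routine induction on dots; it requires tracking how a full column of local-rule applications simulates the bumping path of a single insertion. This is well documented (e.g.\ in Stanley, EC2, \S7.13 or Fomin's appendix there), so citing it would be appropriate rather than claiming it is a quick check. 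Your remark about Knuth's matrix-ball (or Viennot's shadow-line) construction is apt: those give the same result but the growth-diagram proof makes the $\pi \leftrightarrow \pi^{-1}$ symmetry most transparent.
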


Note also that the RSK correspondence is a $\Des$-preserving and hence also a $maj$- preserving bijection in the following sense:

\begin{fact}
For every permutation $\pi\in \sn$,
\[
\Des(P_\pi)=\Des(\pi^{-1}) \quad \text{ and } \quad
\Des(Q_\pi)=\Des(\pi).
\]
Consequently, we have also $maj(P_{\pi})=maj(\pi^{-1})$ as well as $maj(Q_{\pi})=maj(\pi)$.  
\end{fact}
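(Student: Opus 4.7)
The plan is to establish the single identity $\Des(Q_\pi)=\Des(\pi)$ directly from the construction of $Q_\pi$, and then derive $\Des(P_\pi)=\Des(\pi^{-1})$ from the previously stated relation $P_\pi = Q_{\pi^{-1}}$. The $\maj$ claims follow immediately, since $\maj$ on either side is by definition the sum of the elements of the descent set.

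First I would recall that $Q_\pi$ is built up inductively: one row-inserts $\pi(1),\pi(2),\ldots,\pi(n)$ into an initially empty tableau, and records in $Q_\pi$ the value $k$ in the cell that is newly added at the $k$-th step. Let $c_k$ denote that cell. Then $\Des(Q_\pi)=\Des(\pi)$ is equivalent to the statement that, for every $i\in\{1,\dots,n-1\}$, the cell $c_{i+1}$ lies in a strictly lower row than $c_i$ precisely when $\pi(i)>\pi(i+1)$.

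The key technical tool is the classical row-insertion lemma: if $a$ and then $b$ are inserted successively into any standard tableau $T$, producing new cells $\alpha$ and $\beta$ respectively, then $\beta$ is strictly east and weakly north of $\alpha$ when $a<b$, while $\beta$ is weakly west and strictly south of $\alpha$ when $a>b$. One proves this by following the two bumping paths and comparing, row by row, the elements bumped when $a$ is inserted with those bumped when $b$ is inserted on top; the initial dichotomy between $a<b$ and $a>b$ propagates through every row of the paths. Applying the lemma to $a=\pi(i)$, $b=\pi(i+1)$ and to the insertion tableau obtained after the first $i-1$ steps yields the desired equivalence, so that $i\in\Des(\pi)$ iff $i\in\Des(Q_\pi)$.

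Given $\Des(Q_\sigma)=\Des(\sigma)$ for every $\sigma\in\sn$, one obtains $\Des(P_\pi)=\Des(\pi^{-1})$ by specializing to $\sigma=\pi^{-1}$ and invoking the already stated fact that $Q_{\pi^{-1}}=P_\pi$. The two $\maj$ statements are then immediate from the definition. The main obstacle is the row-insertion lemma, which is standard but requires careful inductive bookkeeping along the bumping paths; I would state it cleanly and invoke it as a black box rather than re-derive it here.
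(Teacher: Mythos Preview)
Your argument is correct and is exactly the standard proof of this result (see, e.g., Sagan~\cite{Sagan} or Stanley~\cite{St2}). Note, however, that the paper does not prove this statement at all: it is recorded as a \emph{Fact}, i.e., a well-known property of RSK quoted without proof. So there is no ``paper's own proof'' to compare against; your write-up simply supplies the classical justification that the authors chose to omit.
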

 
It follows from Fact \ref{Fact} that $\pi$ is an involution if and only if  $P_{\pi}=Q_{\pi}$ so that by restricting the RSK correspondence to the set of involutions which will be denoted here by $I_n$, we get a $Des-$ preserving bijection from $I_n$ to the set of standard Young tableaux of order $n$, $SYT(n)$.
\subsection{Conjugacy classes of involutions}
Conjugacy classes in $\sn$ are determined by their cycle structures, which are partitions of $n$. 
The conjugacy classes of involutions in $I_n$ are of cycle structure $(2^k,1^r)$ (where $a^b$ means $b$ appearances of $a$) such that $0 \leq r \leq n$, $0\leq k \leq \frac{n}{2}$ and $2k+r=n$. In other words, conjugacy classes of involutions are determined by the number of fixed points. 

The following known result by Sch\"{u}tzenberger \cite{S}  gives a full description of the image of each conjugacy class of involutions under the RSK correspondence.

\begin{prop}\label{shuz}
An involution $\pi \in I_n$ has $r$ fixed points if and only if $P(\pi)$ has $r$ columns of odd length. 
\end{prop}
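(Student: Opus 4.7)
The plan is to prove Proposition~\ref{shuz} by induction on $n$, with base cases $n\le 1$ immediate. For the inductive step I split on whether $n$ is a fixed point of $\pi$ or not.

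\textbf{First case:} $\pi(n)=n$. Let $\pi'$ denote the restriction of $\pi$ to $\{1,\ldots,n-1\}$; it is an involution in $I_{n-1}$ with $r-1$ fixed points. Because $n$ is the largest entry and is the final value inserted by the RSK algorithm, it must land in a brand-new cell at the end of the first row of $P_\pi$. Hence $P_\pi$ is obtained from $P_{\pi'}$ by appending a new column of length one, so by the induction hypothesis $P_\pi$ has $(r-1)+1=r$ odd-length columns, as required.

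\textbf{Second case:} $\pi(n)=k$ with $k<n$, so $\pi$ contains the transposition $(k,n)$. Let $\pi'\in I_{n-2}$ be the involution obtained by removing this transposition and relabeling. Both $\pi$ and $\pi'$ have the same number $r$ of fixed points, so the inductive step reduces to showing that $P_\pi$ and $P_{\pi'}$ have the same number of odd-length columns. Heuristically, the two RSK operations contributed by the two-cycle, namely inserting $n$ at step $k$ and inserting $k$ at step $n$, together add exactly two cells to the shape; what must be shown is that these two cells land in two rows of \emph{consecutive} lengths, so that the parities of the affected columns flip in opposite directions and the total count of odd-length columns is preserved.

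The main obstacle is this second case, since tracing the interaction between the insertion of $n$ at step $k$, the intervening insertions of $\pi(k{+}1),\ldots,\pi(n{-}1)$, and finally the bumping sequence triggered by inserting $k$ at step $n$ is delicate if attempted directly. The cleanest resolution I know is via Fomin's growth-diagram formulation of RSK: because $\pi$ is an involution, its permutation matrix is symmetric, so the growth diagram is symmetric across the main diagonal, and the sequence of partitions along that diagonal records the evolution of the shape. Fomin's local rules, combined with symmetry, then imply that crossing a diagonal X (a fixed point) grows the partition by a single cell that creates a new odd-length column, while crossing a symmetric off-diagonal pair of X's (a two-cycle) grows the partition by two cells whose contributions to the column-length parities cancel. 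Summing the contributions of the $n$ steps yields exactly $r$ columns of odd length in the terminal partition, which is the shape of $P_\pi$, completing the induction.
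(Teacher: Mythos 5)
First, note that the paper itself gives no proof of Proposition~\ref{shuz}: it is quoted as a known theorem of Sch\"utzenberger with a citation to \cite{S}. So your argument has to stand on its own, and the relevant question is whether it is complete. Your first case is: when $\pi(n)=n$, the final insertion appends $n$ in a new cell at the end of the first row, which necessarily opens a new column of length one and leaves every other column untouched, so the odd-column count goes from $r-1$ to $r$. That part is fine.

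The second case contains a genuine gap, and it is precisely the step you flag as ``the main obstacle'' and then do not actually carry out. The whole content of the theorem is concentrated in the two assertions you state without proof: (i) that $\operatorname{shape}(P_{\pi'})\subseteq\operatorname{shape}(P_{\pi})$ and the two added cells sit at the ends of two \emph{consecutive} rows (equivalently, in the growth-diagram picture, that a symmetric off-diagonal pair of X's contributes a vertical domino, or at least two cells whose parity effects on columns cancel); and (ii) that a diagonal X contributes a single cell landing at the foot of a column of \emph{even} length, hence creating a new odd column rather than destroying one. Neither is automatic: a single added cell always flips the parity of its column, so without (ii) the count could just as well decrease at a fixed-point step; and two cells added as a horizontal domino, or in non-adjacent positions, would change the odd-column count by $\pm 2$ rather than $0$. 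Deriving (i) and (ii) from Fomin's local rules requires a genuine induction across the symmetric growth diagram (this is van Leeuwen's analysis; the consecutive-rows statement is Beissinger's lemma), and naming the framework while asserting its conclusion does not discharge it --- indeed, as written, your justification of (i) and (ii) is essentially ``because the theorem is true.'' Until that local analysis is supplied, the induction in Case~2 does not close.
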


In light of this characterization, we present the following notations: 
\begin{definition}\label{r odd}
 The set of Young diagrams of size $n$ having exactly $r$ odd columns will be denoted by $D_n(r)$. 
 The set of standard Young tableaux of shapes taken from $D_n(r)$ will be denoted by $SYT_n(r)$.

\end{definition}

The discussion above can now be concisely formulated as follows: 

\begin{prop}\label{transfer to tableaux}
Let $\cmu$ be the conjugacy class corresponding to the partition $\mu=(2^k,1^r)$. Then the restriction of the RSK correspondence $$R:\cmu \rightarrow SYT_n(r)$$ is a bijection that preserves the major index, i.e. for each $\pi \in \cmu$ we have $maj(\pi)=maj(R(\pi))$. 
\end{prop}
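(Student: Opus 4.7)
The plan is to assemble three ingredients already at hand in the excerpt: the fact that RSK restricts to a bijection from $I_n$ to $SYT(n)$, Sch\"utzenberger's Proposition \ref{shuz} identifying the image of each conjugacy class of involutions, and the $\Des$-preserving property of RSK on the $Q$-side. Nothing new really needs to be proved; the task is essentially notational bookkeeping.

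First I would set $R(\pi) := Q_\pi$ for $\pi \in \cmu$. Since $\cmu \subseteq I_n$, Fact \ref{Fact} gives $P_\pi = Q_\pi$, so $R$ is just the restriction of the RSK $Q$-map to $\cmu$. By Proposition \ref{shuz}, having $r$ fixed points (which is exactly the defining property of $\cmu$ for $\mu = (2^k,1^r)$) is equivalent to $P_\pi$, and hence $R(\pi)$, having exactly $r$ columns of odd length. By Definition \ref{r odd} this says precisely that $R(\pi) \in SYT_n(r)$, so the map is well-defined with the stated codomain.

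Next I would establish bijectivity. Since RSK restricts to a bijection $I_n \to SYT(n)$ (as noted right before Proposition \ref{shuz}), it suffices to show that the fiber in $I_n$ above any $T \in SYT_n(r)$ lies in $\cmu$. But $T \in SYT_n(r)$ means $T$ has $r$ odd columns, and the ``only if'' direction of Proposition \ref{shuz} (equivalently, applying the ``if'' direction in reverse) shows that its RSK preimage is an involution with $r$ fixed points, hence belongs to $\cmu$. Thus $R$ is a bijection $\cmu \to SYT_n(r)$.

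Finally, major index preservation is immediate from the second displayed Fact in the excerpt: $\Des(Q_\pi) = \Des(\pi)$, so
\[
maj(R(\pi)) = maj(Q_\pi) = \sum_{i \in \Des(Q_\pi)} i = \sum_{i \in \Des(\pi)} i = maj(\pi).
\]
The only ``obstacle'' is making sure the matching $r\text{ fixed points} \leftrightarrow r\text{ odd columns}$ is invoked in both directions to get bijectivity rather than just a well-defined map; beyond that, the proposition is a direct corollary of the material already collected in Section \ref{Background}.
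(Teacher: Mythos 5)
Your proposal is correct and follows exactly the route the paper intends: the paper gives no separate proof, stating the proposition as an immediate consequence of the preceding discussion (RSK restricts to a $\Des$-preserving bijection $I_n \to SYT(n)$ combined with Proposition \ref{shuz}), which is precisely the assembly you carry out. Your extra care in checking that fibers over $SYT_n(r)$ land in $\cmu$ is a welcome but minor elaboration of the same argument.
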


The following definition will be needed in the sequel (note the slight 
change from a similar definition made in the introduction; we prefer this notation for the sake of convenience). 

\begin{definition}
For a shape $\lambda=(\lambda_0,\lambda_1,\dots,\lambda_u)$, let $$b(\lambda)=\sum\limits_{i=0}^u{i \lambda_i}.$$ 
\end{definition}

\begin{rem}\label{graphic blambda}
The numbers $b(\lambda)$, ($b(\lambda')$) can be easily calculated by writing for each $i$ the number $i$ inside each square of row (column) $i$ of $\lambda$ and adding up the numbers to get the rows (columns) sums respectively.  
\end{rem}

\begin{figure}[!ht]\label{rows and columns sums}
  \begin{ytableau}
 {0} &{0} &{0} &{0}\\
 {1} & {1} & {1} \\
 {2} & {2}     \end{ytableau}
,
 \begin{ytableau}
 {0} &{1} &{2} &{3}\\
 {0} & {1} & {2} \\
 {0} & {1}     \end{ytableau}

\caption{Calculating $b(\lambda)$ (left) and $b(\lambda')$ (right) by adding up rows and columns respectively}.
\end{figure}

The continuity of $maj$ inside the Young tableaux of a single Young diagram of a specific shape has been recently proven by Billey, Konvalinka, and Swanson in \cite{Sw} (see Theorem 1.1. there). 
The following is a reformulation of their result:

\begin{prop}\label{Max and Min}
Let $\lambda$ be a Young diagram. Then we have:

$$m(\lambda):={\rm Min}\{maj(T) \mid T \in SYT(\lambda)\}=b(\lambda).$$

$$M(\lambda):={\rm Max}\{maj(T)\mid T\in SYT(\lambda)\}={n \choose 2}-b(\lambda').
  $$

Moreover, every value between $m(\lambda)$ and $M(\lambda)$ appears at least once, except when $\lambda$ is a rectangle with at least two rows and columns, in which case the values $m(\lambda)+1$ and $M(\lambda)-1$ are missing. 
  
\end{prop}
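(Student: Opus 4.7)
The plan is to read the two endpoints off of the Stanley hook--length formula recalled in the introduction, and then establish the continuity claim separately by a combinatorial argument. For the minimum I would start from
\[
SYT(\lambda)^{maj}(q) = \frac{q^{b(\lambda)}[n]_q!}{\prod_{c \in \lambda}[h_c]_q},
\]
observe that each $q$-integer $[k]_q = 1 + q + \cdots + q^{k-1}$ satisfies $[k]_q|_{q=0}=1$ so that the quotient $[n]_q!/\prod_c[h_c]_q$ is a polynomial in $q$ with constant term $1$, and conclude that the lowest exponent of $q$ appearing with nonzero coefficient in $SYT(\lambda)^{maj}(q)$ is exactly $b(\lambda)$, realized by a unique tableau.

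For the maximum I would combine the same formula with the identity $\sum_{c \in \lambda} h_c = n + b(\lambda) + b(\lambda')$, which follows from writing $h_c = \mathrm{arm}_c + \mathrm{leg}_c + 1$ and recognizing that $\sum_c \mathrm{arm}_c = b(\lambda')$ and $\sum_c \mathrm{leg}_c = b(\lambda)$ (both reduce to summing the column or row index over all cells of $\lambda$). A direct degree count then produces $\deg SYT(\lambda)^{maj}(q) = \binom{n}{2} - b(\lambda')$. Equivalently, since the multiset of hook lengths is invariant under transposition, one obtains the palindromic symmetry $q^{\binom{n}{2}} SYT(\lambda)^{maj}(1/q) = SYT(\lambda')^{maj}(q)$, from which $M(\lambda) = \binom{n}{2} - m(\lambda') = \binom{n}{2} - b(\lambda')$ follows, with the maximum again attained by a unique tableau.

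The main obstacle is the continuity, and here I would argue constructively. Given $T \in SYT(\lambda)$ with $maj(T) < M(\lambda)$, the goal is to produce $T' \in SYT(\lambda)$ of the same shape with $maj(T') = maj(T) + 1$. The natural local operation is to interchange the entries $i$ and $i+1$ whenever they lie in cells sharing neither a row nor a column; such a swap preserves standardness, toggles whether $i$ is a descent, and can also alter whether $i-1$ or $i+1$ is a descent, so the net change in $maj$ lies in $\{\pm 1, \pm 2\}$. A connectivity argument on the graph of tableaux joined by such moves then shows that every integer in $[b(\lambda), \binom{n}{2} - b(\lambda')]$ is realized. The rectangular exception is the delicate part: when $\lambda = (c^r)$ with $r, c \geq 2$, both extremal tableaux (the row- and column-reading fillings) are rigidly determined and the admissible swaps near them all shift $maj$ by $2$ rather than $1$; a direct case analysis then confirms that $b(\lambda)+1$ and $M(\lambda)-1$ cannot be attained while all other intermediate values can. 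For non-rectangular $\lambda$ the presence of an inner corner permits a single-step adjustment at the extrema, and this obstruction disappears.
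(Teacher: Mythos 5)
First, a point of reference: the paper does not prove this proposition at all — it is stated as a reformulation of Theorem~1.1 of Billey--Konvalinka--Swanson \cite{Sw}, and the ``continuity'' assertion is the substantial theorem of that paper. Your derivation of the two endpoints from Stanley's formula is correct and self-contained: $[n]_q!$ and $\prod_{c\in\lambda}[h_c]_q$ both have constant term $1$, their ratio is a polynomial, so the least exponent of $SYT(\lambda)^{maj}(q)$ is $b(\lambda)$ with coefficient $1$; and the identity $\sum_{c}h_c=n+b(\lambda)+b(\lambda')$ gives top degree $\binom{n}{2}-b(\lambda')$. That part is fine.

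The continuity argument, however, has a genuine gap, in two respects. First, the claim that swapping $i$ and $i+1$ (when they share no row and no column) changes $maj$ by an amount in $\{\pm1,\pm2\}$ is false. Take $\lambda=(2,2,1)$ and $T$ with rows $(1,3)$, $(2,5)$, $(4)$, so $Des(T)=\{1,3\}$ and $maj(T)=4$; swapping $3$ and $4$ gives the tableau with rows $(1,4)$, $(2,5)$, $(3)$, with $Des=\{1,2,4\}$ and $maj=7$, a jump of $3$. In general the swap toggles the descent at $i$ (contributing $\pm i$) and may independently toggle the descents at $i-1$ and at $i+1$, so the net change need not be small. Second, even if every edge of the swap graph changed $maj$ by at most $2$, connectivity would not imply that every intermediate value is attained: a path from the minimizing to the maximizing tableau could a priori realize only every other value. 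The shape $(2,2)$ shows this is not a hypothetical worry — its two tableaux are joined by a single admissible swap, the graph is connected, the step size is $2$, and the value $m(\lambda)+1=3$ is genuinely missed. So the rectangular exception cannot be disposed of by a deferred ``direct case analysis''; any argument of the connectivity type must already contain the mechanism that distinguishes rectangles from non-rectangles, and that mechanism is precisely what is missing here. The proof in \cite{Sw} instead constructs explicit chains of tableaux along which $maj$ increases by exactly $1$ at each step, which is considerably more delicate than the sketch. Since the paper itself treats the statement as quoted, the honest options are to cite \cite{Sw} for the continuity claim or to supply such a chain construction in full.
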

 
The following lemma determines the diagrams of $D_n(r)$ which attain the minimum and the maximum of the major index. 
\begin{lemma}\label{min and max on $D_n(r)$}

Let $n=2k+r$.
\begin{enumerate}
    
    \item The minimum value of the major index on $D_n(r)$ is $k$. It is attained by the diagram $\lambda=(n-k,k)$. 
    
    \item The maximum value of the major index on $D_n(r)$ is ${n \choose 2}-{r \choose 2}$. It is attained by the diagram $\lambda=(r,1^{2k})$.

\end{enumerate}

\end{lemma}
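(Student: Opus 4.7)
The plan is to apply Proposition \ref{Max and Min}, which reduces the two optimizations to finding, respectively, $\min_{\lambda \in D_n(r)} b(\lambda)$ and $\min_{\lambda \in D_n(r)} b(\lambda')$, since $M(\lambda)=\binom{n}{2}-b(\lambda')$. I would work throughout with the column-sum form of $b$ from Remark \ref{graphic blambda}: if $\lambda$ has columns of lengths $c_1,c_2,\dots$ (the parts of $\lambda'$), then $b(\lambda)=\sum_j \binom{c_j}{2}$. The constraint $\lambda\in D_n(r)$ becomes: exactly $r$ of the $c_j$ are odd, and $\sum_j c_j = n = 2k+r$.

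For part (1), the key step is the pair of elementary inequalities $\binom{c}{2}\geq c/2$ for even $c\geq 0$ (equality at $c\in\{0,2\}$) and $\binom{c}{2}\geq (c-1)/2$ for odd $c\geq 1$ (equality at $c=1$). Summing over all columns and separating the $r$ odd ones from the rest gives
\[
b(\lambda)\;\geq\;\tfrac12\Bigl(\sum_j c_j \;-\; r\Bigr)\;=\;\tfrac{n-r}{2}\;=\;k.
\]
Equality forces every column length to lie in $\{1,2\}$, which pins down $\lambda'=(2^k,1^r)$, i.e.\ $\lambda=(n-k,k)$, and a direct computation gives $b((n-k,k))=k$. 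This yields (1).

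For part (2), I minimize $b(\mu)$ over partitions $\mu=\lambda'$ of $n$ with exactly $r$ odd parts. Having $r$ odd parts forces $\mu$ to have at least $r$ nonzero parts, so writing $\mu=(\mu_0,\dots,\mu_{s-1})$ with $s\geq r$, and using $\mu_i\geq 1$ for $i<s$,
\[
b(\mu)\;=\;\sum_{i=0}^{s-1} i\,\mu_i\;\geq\;\sum_{i=0}^{s-1} i\;=\;\binom{s}{2}\;\geq\;\binom{r}{2}.
\]
Equality forces $s=r$ and $\mu_i=1$ for $1\leq i\leq r-1$, so $\mu=(2k+1,1^{r-1})$, which indeed has $r$ odd parts. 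Transposing, $\lambda=(r,1^{2k})$ achieves $M(\lambda)=\binom{n}{2}-\binom{r}{2}$.

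The proofs themselves are short once Proposition \ref{Max and Min} is invoked; the main piece of care is the parity bookkeeping in part (1), together with checking the degenerate cases $r=0$ and $k=0$ (where one of the two parts of the candidate partition collapses), which one should verify directly are consistent with the stated formulas.
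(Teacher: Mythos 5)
Your proposal is correct, and it follows the paper's overall strategy: reduce both optimizations to minimizing $b(\lambda)$ and $b(\lambda')$ over $D_n(r)$ via Proposition \ref{Max and Min}, exhibit the extremal shapes, and prove matching lower bounds. Your part (2) is essentially identical to the paper's (both observe that $r$ odd columns force at least $r$ columns, whence $b(\nu')\geq\binom{r}{2}$). Part (1) is where you diverge: the paper argues by contradiction, noting that $b(\nu)$ is at least the number of cells outside the first row, so $b(\nu)<k$ would force the first row to have more than $n-k=k+r$ cells, and since at most $r$ columns can have length $1$, more than $k$ columns have length at least $2$, giving $b(\nu)>k$. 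You instead use the identity $b(\lambda)=\sum_j\binom{c_j}{2}$ over column lengths together with the pointwise bounds $\binom{c}{2}\geq c/2$ ($c$ even) and $\binom{c}{2}\geq(c-1)/2$ ($c$ odd), summing to $b(\lambda)\geq\frac{n-r}{2}=k$. Both are elementary and correct; your version is a direct inequality rather than a contradiction, and its equality analysis additionally shows that $(n-k,k)$ is the \emph{unique} minimizer in $D_n(r)$, which the paper does not claim but which costs you nothing. The only point to make explicit in a final write-up is the verification that the two candidate shapes indeed lie in $D_n(r)$ (immediate from $\lambda'=(2^k,1^r)$ and $\lambda'=(2k+1,1^{r-1})$ respectively), which you effectively do, and the degenerate cases $r=0$, $k=0$ that you already flag.
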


\begin{proof}
\begin{enumerate}
    \item 
    Let $m={\rm Min}\{maj(T)\mid T\in SYT_n(r)\}$ and let $\lambda=(n-k,k)$. Since $n=2k+r$, it is easy to see that $\lambda$ has $r$ odd columns,
    so that $\lambda\in D_n(r)$. 
    By Proposition \ref{Max and Min}, $b(\lambda)=0\cdot (n-k) + 1\cdot k=k$ is the minimum value     
    of the major index on $\lambda$, hence $m\leq k$.
    Now, for each $\nu \in D_n(r)$, if $b(\nu)<k$ then the complement of the first line of $\nu$ contains less than $k$ squares and so the first line of $\nu$ must contain more than $n-k=r+2k-k=r+k$ squares. As the number of odd columns is $r$, there are more than $k$ squares in the first line which start columns of size at least $2$ so that $b(\nu)> k$, a contradiction.

    \item Let $M={\rm Max}\{maj(T) \mid T\in SYT_n(r)\}$ and let $\lambda=(r,1^{2k})$. It is easy to see that $\lambda \in D_n(r)$. We have $b(\lambda')=1+2+\cdots +(r-1)={r \choose 2}$ so we conclude from Proposition \ref{Max and Min} that $M\geq {n \choose 2}-{r \choose 2}$.
    Every $\nu \in D_n(r)$  must have at least $r$ columns, hence we must have $b(\nu')\geq {r \choose 2}$, and we are done. 
    
    \end{enumerate}

\end{proof}
We will show in the sequel that for most of the conjugacy classes of involutions, the range of the major index on these classes is an interval, but we start with the exceptional case: the class of involutions without fixed points. 
\begin{lemma}\label{missing}
Let $\C$ be the conjugacy class corresponding to the partition $\mu=(2^k)$.   
Then:

\begin{enumerate}
    \item 
There is no $\pi \in \C$ such that $maj(\pi)=k+1$. 
    \item 
There is no $\pi \in \C$ such that $maj(\pi)={n \choose 2} -1$. 

\end{enumerate}
\end{lemma}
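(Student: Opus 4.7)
The plan is to transfer the question from involutions to standard Young tableaux via Proposition \ref{transfer to tableaux}: it suffices to show that no $T \in SYT_n(0)$ has $\maj(T) = k+1$ (part (1)) or $\maj(T) = \binom{n}{2}-1$ (part (2)). A shape $\lambda \in D_n(0)$ has every column of even length, so it can be written uniquely in the ``doubled'' form $\lambda = (a_1,a_1,a_2,a_2,\dots,a_m,a_m)$ with $a_1 \geq a_2 \geq \cdots \geq a_m \geq 1$ and $\sum_{j=1}^{m} a_j = k$. A direct computation using the row-sum description of $b$ from Remark \ref{graphic blambda} then gives the clean formula
\[
b(\lambda) \;=\; \sum_{j=1}^{m} (4j-3)\, a_j.
\]

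For part (1), Lemma \ref{min and max on $D_n(r)$} singles out $\lambda = (k,k)$ (the case $m=1$) as the unique shape in $D_n(0)$ attaining $b(\lambda) = k$. For $k \geq 2$ this is a rectangle with at least two rows and columns, so the rectangle exception in Proposition \ref{Max and Min} tells us that $m(\lambda)+1 = k+1$ is not realized by any $T \in SYT((k,k))$. For any other $\lambda \in D_n(0)$ we have $m \geq 2$, so
\[
b(\lambda) - k \;=\; \sum_{j=1}^{m}(4j-4)\, a_j \;=\; 4 \sum_{j=2}^{m}(j-1)\, a_j \;\geq\; 4 a_2 \;\geq\; 4,
\]
which means that even the minimum major index on such a shape exceeds $k+1$. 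The trivial case $k=1$ is handled separately: $D_2(0) = \{(1,1)\}$ and its unique SYT has $\maj = 1 \ne 2$.

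For part (2), Lemma \ref{min and max on $D_n(r)$} identifies $\lambda = (1^{n})$ as the unique maximizer on $D_n(0)$, and its unique SYT has $\maj = \binom{n}{2}$, not $\binom{n}{2}-1$. For any other $\lambda \in D_n(0)$ we have $\lambda_1 \geq 2$, so the second column of $\lambda$ is nonempty; since every column length is even, $\lambda'_2 \geq 2$, and hence $b(\lambda') \geq (2-1)\lambda'_2 \geq 2$. Proposition \ref{Max and Min} then gives $M(\lambda) = \binom{n}{2} - b(\lambda') \leq \binom{n}{2}-2$, so $\binom{n}{2}-1$ lies strictly above the major index range on such a shape. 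I do not anticipate any serious obstacle; the main subtlety is the appeal to the rectangle exception in Proposition \ref{Max and Min} for the extremal shape $(k,k)$ in part (1), which makes that single case genuinely different from the ``bulk'' estimate that covers all other shapes.
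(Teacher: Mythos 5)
Your proof is correct, but it takes a genuinely different route from the paper's. The paper, after the same reduction to $SYT_n(0)$, argues directly on descent sets: for the value $k+1$ it analyzes which subsets of $\{1,\dots,n-1\}$ could sum to $k+1$ and derives a contradiction from the fact that a shape in $D_n(0)$ has at most $k$ columns, each of length at least $2$; for the value $\binom{n}{2}-1$ it observes that the descent set is forced to be $\{2,\dots,n-1\}$, which forces the tableau to have shape $(2,1^{n-2})$, a shape with odd columns. You instead exploit the structural fact that every $\lambda\in D_n(0)$ is a ``doubled'' partition $(a_1,a_1,\dots,a_m,a_m)$ with $\sum a_j=k$, derive the identity $b(\lambda)=\sum_j(4j-3)a_j$, and conclude that every shape other than $(k,k)$ has $m(\lambda)\geq k+4$ (and every shape other than $(1^n)$ has $M(\lambda)\leq\binom{n}{2}-2$), so that the only shape that could conceivably realize $k+1$ is the rectangle $(k,k)$, where the exceptional clause of Proposition \ref{Max and Min} rules it out. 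Your version is quantitatively sharper (it exhibits a gap of at least $4$ above the minimum, resp.\ $2$ below the maximum, on all non-extremal shapes) and meshes nicely with the machinery the paper uses elsewhere, but it leans on the full strength of the Billey--Konvalinka--Swanson theorem, including the rectangle exception, whereas the paper's argument for this particular lemma is elementary and self-contained. Your separate treatment of $k=1$ (where $(k,k)=(1,1)$ is not a genuine rectangle) is a necessary care point that you handled correctly.
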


\begin{proof}

According to Proposition \ref{transfer to tableaux}, it is sufficient to prove that there is no $T \in SYT_n(0)$ such that $maj(T)\in \{k+1,{n \choose 2} -1\}$. 
\begin{enumerate}
    \item 
    First, since there are no odd columns in any diagram of $D_n(0)$ and $n=2k$, the number of columns does not exceed $k$. If there is some $T\in SYT_n(0)$ such that $maj(T)=k+1$, then we have either $Des(T)=\{k+1\}$ or $Des(T)$ contains elements less than $k+1$, the sum of which is $k+1$. 
    In the first case, all the elements $1,\dots,k+1$ occupy the first line, which means that $T$ has more than $k$ columns, a contradiction. 
    In the second case, $k+1,\dots ,2k$ must be placed in different columns. Since all the columns contain at least $2$ elements, the numbers $1,\dots, k$ are also placed in different columns but this means that the major index is $k$ and not $k+1$.

    \item We have already seen in Lemma \ref{min and max on $D_n(r)$} that the maximal value, $M={n \choose 2}$ is attained by the one-column diagram. Any other diagram in $D_n(0)$ contains at least two columns, all of them even. If $T\in SYT_n(0)$ is such that $maj(T)={n \choose 2}-1=2+\cdots+ (n-1)$, then we must have $Des(T)=\{2,\dots,n-1\}$. The element $1$ must be placed in the upper left corner and since $1\notin Des(T)$, the element $2$ must be placed in the first line, right after the element $1$. Since $2 \in Des(T)$, we must put $3$ below $1$ and this argument proceeds further and gives us the tableau of Fig. \ref{max minus 1} which in not in $D_n(0)$.

\begin{figure}[!ht]\label{max minus 1}
\begin{center}

$\lambda=$ \begin{ytableau}
 {1} & {2}   \\
  {3} \\
  {4} \\
  {\cdot} \\
  {\cdot} \\
  {\cdot } \\
  {n}\\   
\end{ytableau}
\caption{}  
\end{center}
\end{figure}

 \end{enumerate}

\end{proof}

\begin{definition}
Recall that a diagram of the form $(u,1^{n-u})$ is called a hook. If the length of the leg of $\lambda$, $n-u+1$ is odd then $\lambda$ will be called an {\it odd hook}. It will be called an {\it even hook} otherwise.  
\end{definition}
The following lemma is a direct consequence of Proposition \ref{transfer to tableaux} and Lemma \ref{min and max on $D_n(r)$}.2.
\begin{lemma}\label{odd hook attains maximum}
Let $\mu$ be the conjugacy class of involutions with exactly $r$ fixed points. Then the maximal value of major index over $\mu$ is the maximal value of $D_n(r)$ which is obtained by the hook $\lambda=(r,1^{n-r})$.

\end{lemma}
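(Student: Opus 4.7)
The plan is to combine the two cited results in sequence; no new combinatorics is needed, since the lemma is billed as a direct consequence.

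First I would invoke Proposition \ref{transfer to tableaux}, which states that restricting RSK to the conjugacy class $\mu$ (of cycle type $(2^k,1^r)$, with $n=2k+r$) yields a bijection onto $SYT_n(r)$ that preserves $\maj$. Consequently
\[
\max\{\maj(\pi) : \pi \in \mu\} \;=\; \max\{\maj(T) : T \in SYT_n(r)\},
\]
so the problem on the involution side is equivalent to the corresponding extremal problem on the tableau side.

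Next I would apply part (2) of Lemma \ref{min and max on $D_n(r)$}: the right-hand side above equals ${n \choose 2}-{r \choose 2}$, and it is realized by any standard filling of the hook shape $\lambda=(r,1^{2k})$. Since $n=2k+r$ gives $2k=n-r$, this shape is precisely the hook $(r,1^{n-r})$ named in the statement, which finishes the argument.

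There is no real obstacle here; both ingredients have already been proved, and the proof reduces to invoking them in order and carrying out the bookkeeping identification $2k=n-r$ between the two parametrizations of the cycle type.
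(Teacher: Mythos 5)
Your proposal is correct and matches the paper exactly: the paper gives no separate proof, stating the lemma as a direct consequence of Proposition \ref{transfer to tableaux} and Lemma \ref{min and max on $D_n(r)$}.2, which is precisely the two-step chain you carry out. One small slip: the maximum is attained by \emph{some} standard tableau of the hook shape $(r,1^{n-r})$, not by \emph{any} standard filling of it, but this does not affect the argument since the lemma only asserts existence.
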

The following lemma will be of great use when we prove our main result in the next section. 

\begin{lemma}\label{gap inside one tableau}

Let $n\geq 6$ and let $\lambda \vdash n$ be such that $\lambda\neq (1^n)$ and $\lambda \neq (n)$. Then  $M(\lambda)-m(\lambda) \geq 4$. 

Moreover, if $\lambda=(a^b)$ is a rectangle then $M(\lambda)-m(\lambda) \geq 6$.
\end{lemma}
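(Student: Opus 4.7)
The plan is to reformulate $g(\lambda) := M(\lambda) - m(\lambda)$ as a count of a combinatorial structure. By Proposition \ref{Max and Min}, $g(\lambda) = \binom{n}{2} - b(\lambda) - b(\lambda')$. Summing the row indices of $\lambda$ column by column yields the identities $b(\lambda) = \sum_j \binom{\lambda'_j}{2}$ and, symmetrically, $b(\lambda') = \sum_i \binom{\lambda_i}{2}$, so
\[
g(\lambda) = \binom{n}{2} - \sum_i \binom{\lambda_i}{2} - \sum_j \binom{\lambda'_j}{2}.
\]
Interpreting the three summands as, respectively, all unordered pairs of cells, pairs sharing a row, and pairs sharing a column, we see that $g(\lambda)$ counts the pairs $\{c_1,c_2\}$ of cells of $\lambda$ that lie in \emph{different} rows \emph{and} \emph{different} columns; call such a pair a \emph{skew pair}. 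The lemma thus reduces to bounding the number of skew pairs in $\lambda$.

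I would split the first assertion into two cases. If $\lambda = (u, 1^{n-u})$ is a hook, then $\lambda \notin \{(n),(1^n)\}$ forces $2 \le u \le n-1$, and a skew pair must consist of a non-corner arm cell together with a non-corner leg cell, giving $g(\lambda) = (u-1)(n-u)$. This is minimized at $u = 2$ and $u = n-1$, producing the value $n-2 \ge 4$ since $n \ge 6$. If $\lambda$ is not a hook, then $\lambda_1 \ge 2$, so the $2 \times 2$ block $B = \{(0,0),(0,1),(1,0),(1,1)\}$ sits inside $\lambda$ and already contributes the two skew pairs $\{(0,0),(1,1)\}$ and $\{(0,1),(1,0)\}$. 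Since $n \ge 5$, at least one cell $c \in \lambda$ lies outside $B$; a short case-split on the position of $c$ --- top row with $j \ge 2$, left column with $i \ge 2$, $c = (1,j)$ or $(i,1)$ with the other coordinate $\ge 2$, or $c = (i,j)$ with $i,j \ge 2$ --- shows in each case that $c$ is skew-paired with at least two cells of $B$, yielding $g(\lambda) \ge 4$.

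For the moreover statement, let $\lambda = (a^b)$ with $a,b \ge 2$ and $ab = n \ge 6$. In a rectangle each skew pair is obtained by choosing $2$ rows, $2$ columns, and one of the two diagonals of the resulting $2 \times 2$ sub-rectangle, so
\[
g(\lambda) = 2\binom{a}{2}\binom{b}{2} = \frac{ab(a-1)(b-1)}{2}.
\]
Subject to $a,b \ge 2$ and $ab \ge 6$, this minimum is attained at $(a,b) \in \{(2,3),(3,2)\}$ and equals $6$. The main (and only somewhat delicate) obstacle in the argument is the case-split in the non-hook case, where one must verify the bound uniformly over all possible positions of the extra cell outside $B$; the remaining cases are immediate from the skew-pair formula.
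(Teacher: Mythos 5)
Your proof is correct, and it takes a genuinely different route from the paper's. Both arguments start from the same identity $M(\lambda)-m(\lambda)=\binom{n}{2}-b(\lambda)-b(\lambda')$, but the paper bounds this quantity by comparing two explicit fillings $T_1$ and $T_2$ of $\lambda$ row by row (obtaining $\Delta_0=0$, $\Delta_1=\lambda_1(\lambda_0-1)$ and $\Delta_i\ge 0$ for $i\ge 2$) and then running a case analysis on $\lambda_1\in\{1,2,\ge 3\}$, several branches of which are settled by noting that $\lambda$ contains a small shape such as $(2,2,1)$ or $(2,1^4)$ whose $\Delta$ is at least $4$ ``by inspection.'' Your reinterpretation of the difference as the number of pairs of cells lying in distinct rows \emph{and} distinct columns buys two things: it makes the monotonicity of $\Delta$ under adding cells --- which the paper's ``contains the diagram'' steps use implicitly, without justification --- completely transparent, and it yields exact closed formulas, $(u-1)(n-u)$ for hooks and $2\binom{a}{2}\binom{b}{2}$ for rectangles, so both bounds drop out with essentially no case analysis beyond placing one cell outside the top-left $2\times 2$ block (your five-way case split there checks out). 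The only step you leave compressed is the verification that $2\binom{a}{2}\binom{b}{2}\ge 6$ subject to $a,b\ge 2$ and $ab\ge 6$: since one of $a,b$ is then at least $3$, we get $(a-1)(b-1)\ge 2$ and hence $ab(a-1)(b-1)/2\ge 6$, so this is immediate.
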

\begin{proof}
 
Recall from lemma \ref{Max and Min} that $m(\lambda)=b(\lambda)$, while $M(\lambda)={n \choose 2}-b(\lambda') $, so that $\Delta:=M(\lambda)-m(\lambda)={n \choose 2}-(b(\lambda)+b(\lambda'))$. 

Let $T_1$ be the tableau of shape $\lambda$, filled by the numbers $0,\dots, n-1$ in such a way that they can be read in this order along the consecutive rows, increasing along each row.  

Let $T_2$ be the tableau of shape $\lambda$, filled in such a way that the square placed in row $i$ and column $j$ contains $i+j$ where the rows and columns are numbered $0,1,\dots$, etc.

Here is an example with $\lambda=(4,3,1,1)$:

$T_1=$
\ytableausetup {boxsize=3em}
\begin{ytableau}
 {0} & {1} & {2} & {3}  \\
  {4} & {5} & {6}\\
  {7} \\
  {8} \\
\end{ytableau} 
\ytableausetup {boxsize=3em}
$T_2=$\begin{ytableau}
 {0+0} & {0+1} & {0+2} & {0+3}  \\
  {1+0} & {1+1} & {1+2}\\
  {2+0} \\
  {3+0} \\
\end{ytableau} 
\ytableausetup {boxsize=normal}

Note that the sum of elements of $T_1$ is ${n \choose 2}$ while the sum of elements of $T_2$ is $b(\lambda)+b(\lambda')$, so that $\Delta$ is the difference of the row sums.  
Denote for each $i$ by $\Delta_i$
the difference between the sum of elements of row number $i$ in $T_1$ and the sum of elements of row number $i$ in $T_2$, so that $\Delta=\sum\limits_{i}\Delta_i$. 

It is easy to see that $\Delta_0=0$ and $\Delta_1=\lambda_1(\lambda_0-1)$.  Moreover, for each $i \geq 2$, we have $\Delta_i \geq 0$
since the first element of each such row in $T_1$ is greater than the corresponding element in $T_2$ and in both tableaux all elements of row $i$ are consecutive.

By the assumptions of the lemma, we must have $\lambda_1 \geq 1$. we divide in several cases according to the value of $\lambda_1$:
\begin{itemize}
    \item If $\lambda_1 \geq 3$ then $\lambda_0 -1 \geq 2$ so that $\Delta \geq \Delta_1 \geq 6$ and we are done. 
\item  If $\lambda_1=2$ then we must have $\lambda_0 \geq 2$. Now, if $\lambda_0=2$ then $\lambda$ contains the diagram $(2,2,1)$ which satisfies $\Delta \geq 4$ by inspection. 

The case $\lambda_1=2$ and $\lambda_0=3$ is symmetric to the last case, i.e $\Delta$ is invariant under transposition. Furthermore, if $\lambda_0 \geq 4$ then $\Delta_1 \geq 6$ and we are done.

\item if $\lambda_1=1$ then we must have $\lambda_0 \geq 2$ since otherwise we get $\lambda=(1^n)$ which contradicts the assumptions.  If $\lambda_0=2$ then $\lambda$ contains the diagram $(2,1^4)$, since by the assumption $n \geq 6$, so that $\Delta \geq 4$ by inspection.

\end{itemize}

Regarding the second claim, if $\lambda=(a^b)$ is a rectangle, then since $n \geq 6$, it must contain either the shape $(3^2)$ or the shape $(2^3)$. If $\lambda$ contains the shape $(3^2)$ then it is easy to see that $\Delta \geq \lambda_1(\lambda_0-1) \geq 6$. The other case follows by symmetry.
\end{proof}

\section{The range of the major index on conjugacy classes of involutions}\label{main results}
Our main result is the following:
\begin{thm}\label{main theorem itself}
Let $\mu=(2^k,1^r)$ be a partition of $n$ and let $\cmu$ be the corresponding conjugacy class of involutions in $\sn$. Then 
\begin{itemize}
    \item If $r\neq 0$, then the major index on $\cmu$ attains all values between $k$ and $\binom{n}{2}-\binom{r}{2}$. 
    
    \item If $r=0$, then the major index on $\cmu$ attains all the values mentioned in the first clause, excluding $k+1$ and $\binom{n}{2}-1$.
    \end{itemize}
  Moreover, any other value outside this range is not attained. 
\end{thm}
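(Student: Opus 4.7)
The plan is to combine Proposition \ref{transfer to tableaux}, which reduces the problem to computing the $\maj$ range on $\SYT_n(r)$, with the single-shape continuity result Proposition \ref{Max and Min}. Lemma \ref{min and max on $D_n(r)$} already pins down both endpoints and certifies that no value outside $[k,\binom{n}{2}-\binom{r}{2}]$ can occur, while Lemma \ref{missing} supplies the two omissions when $r=0$. What remains is the positive statement: every integer in the prescribed range is realised by some $T\in\SYT_n(r)$.

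My strategy is to produce an ordered chain of shapes $\lambda^{(0)},\lambda^{(1)},\dots,\lambda^{(L)}$ in $D_n(r)$, starting at $\lambda^{(0)}=(n-k,k)$ (which achieves the minimum by Lemma \ref{min and max on $D_n(r)$}) and ending at $\lambda^{(L)}=(r,1^{2k})$ (which achieves the maximum), such that the sequence $b(\lambda^{(i)})=m(\lambda^{(i)})$ is weakly increasing with small controlled jumps and such that consecutive intervals $[m(\lambda^{(i)}),M(\lambda^{(i)})]$ overlap. Each step of the chain slides one or two boxes into a lower row in a way that preserves the number of odd columns, so $b$ increases by only a few units at a time. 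Applying Proposition \ref{Max and Min} to each $\lambda^{(i)}$ then covers every integer in $[m(\lambda^{(i)}),M(\lambda^{(i)})]$, with the possible exception of $m(\lambda^{(i)})+1$ and $M(\lambda^{(i)})-1$ when $\lambda^{(i)}$ happens to be a non-trivial rectangle. Lemma \ref{gap inside one tableau} is used to guarantee that the intervals are wide enough ($\geq 4$, or $\geq 6$ for rectangles) that the small overstep in $b$ still leaves room for the next interval to overlap the previous one.

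The main obstacle is the rectangle case, where Proposition \ref{Max and Min} leaves a genuine hole at $m(\lambda)+1$ and $M(\lambda)-1$. For each rectangular $\lambda=(a^b)\in D_n(r)$ with $a,b\geq 2$, I plan to exhibit an explicit non-rectangular neighbour $\mu\in D_n(r)$, obtained by shifting one or two boxes while keeping the odd-column count equal to $r$, whose range contains these two forbidden values of $\lambda$ and which, being non-rectangular, has no holes of its own. When $r>0$ the starting shape $(n-k,k)$ is rectangular only in the degenerate situation $n-k=k$, which forces $r=0$, so every rectangle encountered during the sweep admits such a rescuer and no exceptions occur. When $r=0$ the offending values $k+1$ and $\binom{n}{2}-1$ sit immediately next to the only shapes achieving the extremes, namely $(k,k)$ and $(1^{2k})$, and one checks that no shape in $D_n(0)$ can have those specific values in its range — this is exactly the content of Lemma \ref{missing}. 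Small $n$, where Lemma \ref{gap inside one tableau} does not apply, and the degenerate parameters $k\leq 1$ or $r\in\{0,n\}$, would be verified by direct inspection.
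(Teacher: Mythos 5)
Your proposal follows essentially the same route as the paper: reduce to $SYT_n(r)$ via Proposition \ref{transfer to tableaux}, fix the endpoints with Lemma \ref{min and max on $D_n(r)$}, sweep $D_n(r)$ by a chain of shapes whose $maj$-intervals overlap, invoke Proposition \ref{Max and Min} for continuity within each shape and Lemma \ref{gap inside one tableau} for the interval widths, and quote Lemma \ref{missing} for the two omissions at $r=0$. The substantive thing you have not supplied, however, is the chain itself. It is not automatic that from an arbitrary $\lambda\in D_n(r)$ one can always slide one or two boxes to reach another shape of $D_n(r)$ with $b$ increasing by at most $4$, and exhibiting such moves is the bulk of the paper's proof: an explicit algorithm (if some row index $i$ satisfies $\lambda_i>\lambda_{i+1}>\lambda_{i+2}$, move the last box of row $i$ down to row $i+2$, which swaps the parities of two columns back and forth and gives $b(\nu)=b(\lambda)+2$; otherwise move a vertical domino, giving $b(\nu)=b(\lambda)+4$ or $M(\nu)=M(\lambda)+2$; treat hooks separately) together with the verification of these increments and of the preservation of the odd-column count. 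You would also need to confront the endgame near the single-column shape: for $r=1$ the domino move from $(2^2,1^{n-4})$ to $(1^n)$ overshoots ($M(\lambda)=\binom{n}{2}-2$ but $m(\nu)=\binom{n}{2}$), and one must instead route through $(2,1^{n-2})$ by moving a single box. This occurs for every odd $n$, not just small $n$ or the degenerate parameters $k\le 1$, $r\in\{0,n\}$ that you propose to check by inspection, so your catch-all for exceptional cases does not cover it. The overall architecture is right and matches the paper's; what is missing is precisely the combinatorial core.
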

We present here the sketch of the proof and postpone the details to Section \ref{detailed proofs}. 
\subsection{Sketch of the proof}
By Propositions \ref{shuz} and \ref{transfer to tableaux}, it is sufficient to prove our results for the sets $SYT_r(n)$, consisting of all standard tableaux of shapes having exactly $r$ odd columns. 

This will be done by ordering the set $D_n(r)$ of diagrams of size $n$ with exactly $r$ odd columns, (see Definition \ref{r odd}) by reverse dominant order (see Definition 2.2.2 in \cite{Sagan}) and presenting an algorithm which starts with the diagram $\lambda^0=(n-k,k)=(k+r,k)$, attaining the minimum value of $maj$ over $D_n(r)$ which is $k$ and ends with the odd hook diagram $\lambda^e=(r,1^{2k})$, attaining the maximum value of $maj$ over $D_n(r)$ which is ${n \choose 2} -{r \choose 2}$ (see Lemma \ref{min and max on $D_n(r)$}).  The algorithm traverses the set $D_n(r)$ in such a way that in each step one or two squares of a diagram $\lambda \in D_n(r)$ are transferred to a new place to obtain another diagram $\nu\in D_n(r)$ such that following condition is satisfied:

\begin{equation}\label{inequality}
M(\lambda)\geq m(\nu),    
\end{equation}
where $M(\lambda)$ ($m(\nu)$) is the maximum (minimum) value of $maj$ on $SYT(\lambda)$ ($SYT(\nu)$), respectively as in Proposition \ref{Max and Min}. 
Together with Proposition \ref{Max and Min}, we are done, apart from some special cases that will be treated separately in Section \ref{detailed proofs}. 
\subsection{The algorithm}

Let $\lambda=(\lambda_0,\lambda_1,\dots,\lambda_t)$. We add infinite number of zeroes at the end of $\lambda$ and write $\lambda$ as a Young diagram.
Also, for each $i$, denote the last square of the row $\lambda_i$ by $\lambda_i^*$ . 
Now perform the following steps:

\begin{enumerate}

    \item If $\lambda$ is an odd hook, then we are done by Lemma \ref{min and max on $D_n(r)$}.2. 
    
    \item If $\lambda$ is an even hook, then we distinguish between two cases:
    \begin{itemize}
    \item If $\lambda=(1^{2k})$, i.e. $r=0$, then again we are done by Lemma \ref{min and max on $D_n(r)$}.2.

     \item Otherwise, let $\nu$ be the shape obtained from $\lambda$ by removing the square $\lambda_0^*$ and placing it at the end of the first column of $\lambda$. This $\nu$ is an odd hook so we are back in (1)  (see the passage from $\lambda^7$ to $\lambda^8$ in Example \ref{main example}). We will justify this step in  Lemma \ref{correctness in step 2}
     \end{itemize} 
    \item If there is some $0 \leq j \leq t$ such that $\lambda_{j}>\lambda_{j+1}>\lambda_{j+2}$, then let $i$ be maximal with respect to this property. Since $\lambda_i>\lambda_{i+1}>\lambda_{i+2}$, we have in $\lambda$ a column of length $i+1$ and a column of length $i+2$ (see Fig. \ref{first case}).   
    Now, remove $\lambda_i^*$ and place it as the last square of row $i+2$ and let $\nu$ be the resulting shape.
    We will elaborate on the utility of this step in Lemma \ref{correctness in step 3}

This case is illustrated in Figs. \ref{first case} and \ref{first case 2 }.

\begin{figure}[!ht]
\begin{center}

$\lambda=  $\begin{ytableau}
 {} & {} & {} & {*}  \\
  {} & {} & {}\\
  {} \\
  {} \\
\end{ytableau}
$\longrightarrow \nu= $ \begin{ytableau}
 {} & {} & {}   \\
  {} & {} & {}\\
  {} & {*}\\
  {} \\
\end{ytableau}

\caption{\label{first case} $\lambda=(4,3,1,1)$ and $\nu=(3,3,2,1), i=0$.}
\end{center}
\end{figure}

\begin{figure}[!ht]
\begin{center}

$\lambda=  $\begin{ytableau}
 {} & {} & {} \\
  {} & {*} \\
  {} \\
\end{ytableau}
$\longrightarrow \nu= $ \begin{ytableau}
 {} & {} & {}   \\
  {} \\
  {} \\
  {*} \\
\end{ytableau}

\caption{\label{first case 2 } $\lambda=(3,2,1)$ and $\nu=(3,1,1,1), i=1$.}
\end{center}
\end{figure}
    
\item Otherwise, if there is no $0 \leq j \leq t$ such that $\lambda_j>\lambda_{j+1}>\lambda_{j+2}$, then there must exist some $j$ such that  $\lambda_j>\lambda_{j+1}=\lambda_{j+2}$ (the existence of such $j$ is guaranteed since we can always choose $j=t$ to get $\lambda_t>\lambda_{t+1}=0=\lambda_{t+2}$). Let $i$ be maximal with respect to this property and such that $\lambda_i>1$  (recall that the case $\lambda_i=\lambda_j=1$ for each $i,j$ was treated earlier in cases (1) and (2)). 
 
  For the $i$ we chose, we must have $\lambda_{i-1}=\lambda_i$ since otherwise, we have already treated this case in (3). This means that the squares $\lambda_{i-1}^*$ and $\lambda_i^*$ form a vertical domino  (see Figures \ref{second case} . We distinguish between two sub-cases. 
\begin{enumerate}
   
   \item If we have $\lambda_i-\lambda_{i+1}=1$ then, by the maximality of $i$ we must have $\lambda_i=2$. In this case, we put the domino at the end of the first column (see Figure \ref{second case}, Left). 
    \item Now, if $\lambda_i-\lambda_{i+1}>1$, then we move that domino to the pair of squares located right after $\lambda_{i+1}^*$ and $\lambda_{i+2}^*$  (see Figure \ref{second case}, Right).
    
\end{enumerate}

We justify these steps in Lemma \ref{correctness in step 4}. 

\item Back to step $(1)$ with $\lambda=\nu$. 
\end{enumerate}

\begin{figure}[!ht]
\begin{center}

\begin{ytableau}
 {} & {}  & {} \\
  {} & {}  \\
  {} & {*} \\
  {} & {*}  \\
  {} \\
  {}   \\
  {}
\end{ytableau}
$\longrightarrow$
\begin{ytableau}
 {} & {} & {}  \\
  {} & {} \\
  {} \\
  {} \\
  {}   \\
  {}\\
  {} \\
  {*} \\
  {*}
\end{ytableau}
\hspace{1cm}
\begin{ytableau}
 {} & {}  & {}\\
  {} & {} & {*} \\
  {} & {} & {*} \\
  {}  \\
  {} \\
  {}  
\end{ytableau}
$\longrightarrow$
\begin{ytableau}
 {} & {} & {}  \\
  {} & {} \\
  {} & {} \\
  {} & {*} \\
  {} & {*}   \\
  {}\\
\end{ytableau}

\caption{Left: $i=3$, Right: $i=2$}\label{second case}
\end{center}
\end{figure}



\begin{figure}[!ht]
\begin{center}

$\lambda^0=$ \begin{ytableau}
 {} & {} & {}  & {} & {*}  \\
  {} & {} & {} & {} 
\end{ytableau} $\rightarrow$
$\lambda^1=$ \begin{ytableau}
 {} & {} & {}  & {} \\
  {} & {} & {} & {*} \\
  {}
  \end{ytableau}$\rightarrow$
  $\lambda^2=$ \begin{ytableau}
 {} & {} & {}  & {*}  \\
  {} & {} & {} \\
  {}\\
  {}
\end{ytableau}
$\rightarrow$
$\lambda^3=$ \begin{ytableau}
 {} & {} & {}  \\
  {} & {} & {} \\
  {} & {*} \\
  {}
\end{ytableau}
$\rightarrow$
$\lambda^4=$ \begin{ytableau}
 {} & {} & {*}  \\
  {} & {} & {*} \\
  {} \\
  {}\\
  {}
\end{ytableau}
$\rightarrow$
$\lambda^5=$ \begin{ytableau}
 {} & {}   \\
  {} & {}  \\
  {} & {} \\
  {} & {*} \\
  {}
\end{ytableau}
$\rightarrow$
$\lambda^6=$ \begin{ytableau}
 {} & {}   \\
  {} & {*}  \\
  {} & {*} \\
  {} \\
  {}   \\
  {}
\end{ytableau}
$\rightarrow$
$\lambda^7=$ \begin{ytableau}
 {} & {*}   \\
  {}  \\
  {} \\
  {} \\
  {}   \\
  {}\\
  {} \\
  {}
\end{ytableau}
$\rightarrow$
$\lambda^8=$ \begin{ytableau}
 {} \\
  {}  \\
  {} \\
  {} \\
  {}   \\
  {}\\
  {} \\
  {}\\
  {}
\end{ytableau}
\caption{\label{full example}}
\end{center}
\end{figure}

\begin{example}\label{main example}
Let $n=9$ and $\mu=(2^4,1)$. Then the RSK algorithm takes the elements of $\cmu$ to $D_9(1)$ and by Lemma \ref{min and max on $D_n(r)$}.1, the diagram which attains the minimum value of $maj$ is $\lambda^0=(5,4)$. 
The process of the algorithm ends in diagram $\lambda^8$ which is an odd hook that attains the maximum value of $maj$. The process is depicted in Fig.\ref{full example}.
The data of the tableaux taking part in the process are listed in the table after that figure. 

\begin{center}
\begin{tabular}{||c | c | c  | c|c |c||} 
 \hline
 $i$ & $min(\lambda^i)$ & $max(\lambda^i)$ & Step used \\ [0.5ex] 
 \hline\hline
 0 & 4 & 20 & 3 \\ 
 \hline
 1 & 6 & 24 & 3  \\
 \hline
 2 & 8 & 27 & 3  \\
 \hline
 3 & 10 & 29 & 3 \\
 \hline
 4 & 12 & 30 & 4(b)  \\ 
 \hline
  5 & 16 & 32 & 3 \\
 \hline
  6 & 18 & 33 & 4(a) \\
 \hline
  7 & 28 & 35 & 2 \\
 \hline
  8 & 36 & 36 & 1 \\
 \hline
\end{tabular}
\end{center}
\end{example}

\section{Correctness of the algorithm} \label{detailed proofs}
In this section, we provide comprehensive proof of the correctness of the algorithm we presented in the previous section. We divide our treatment into two parts, according to the number of fixed points $r$ in the conjugacy class of involutions $\cmu$ corresponding to the partition $\mu=(2^k,1^r)$.  
We start with the case $r \geq 2$ and defer the case $r<2$ to Subsection \ref{special cases}.

The main theme of Theorem \ref{main theorem itself} is showing that in each step of the algorithm when we pass from $\lambda$ to $\nu$, we have $M(\lambda) \geq m(\nu)$. 
This inequality will be proved by one of the following methods.  
If $\lambda=(n-k,1^k)$ is an even hook (this happens in step $(2)$ of the algorithm), then we prove it directly in Lemma \ref{correctness in step 2}. 

Otherwise, we show that one of the following conditions is satisfied.
\medskip
\begin{enumerate}
    \item $$0\leq m(\nu)-m(\lambda) \leq 4$$
    
    \item $$M(\nu)-M(\lambda)= 2.$$
\end{enumerate}

These conditions, together with Lemma \ref{gap inside one tableau} imply that whenever we pass from $\lambda$ to $\nu$ in the process of the algorithm, we have $M(\lambda) \geq m(\nu)$.\medskip

(Indeed, if $0\leq m(\nu)-m(\lambda) \leq 4$ then $m(\nu) \leq m(\lambda)+4$ and by Lemma \ref{gap inside one tableau} we have $M(\lambda)-m(\lambda) \geq 4$, so $m(\lambda)+4\leq M(\lambda)$, hence $m(\nu) \leq M(\lambda)$.  The other case is similar, using Lemma \ref{gap inside one tableau} for $\nu$ rather than $\lambda$). 
For more clarification, see Fig. \ref{diagrams}.

\begin{figure}[!ht]
\label{interval}
    \centering
    \includegraphics[scale=0.75]{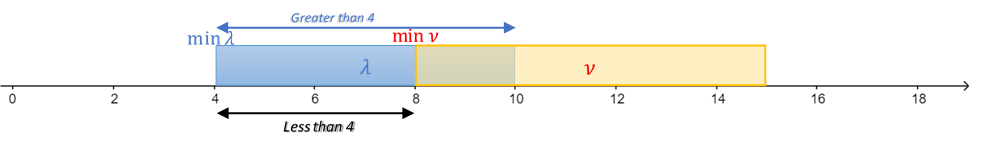}
    \caption{}
    \label{diagrams}
\end{figure}

Now, in the case where both $\lambda$ and $\nu$ are not rectangles with more than $2$ rows, the fact that $M(\lambda)\geq m(\nu)$, together with Proposition \ref{Max and Min} which assures us that $maj$ obtains every value in their ranges, guarantee that in the passage from $\lambda$ to $\nu$, all the values of $maj$ are attained. 

If, $\lambda$ is a rectangle with more than $2$ rows, then by Lemma \ref{gap inside one tableau}, we have that $M(\lambda)-m(\lambda) \geq 6$ and by Proposition \ref{Max and Min}, each value in the open integral interval $(m(\lambda)+1,M(\lambda)-1)$ is attained, so that by an argument similar to the one enclosed by the parentheses above, we immediately get that $m(\nu) \leq M(\lambda)-2$ and we proceed as before. The case where $\nu$ is a rectangle with more than $2$ rows is treated similarly. 

Note that the Algorithm does not necessarily arrive at each shape of $D_n(r)$, but it does prove that 
all the values from $m(\lambda^0)$
to $M(\lambda^e)$ are obtained by $maj$. 

We turn now to explain the validity of steps $(2),(3)$, and $(4)$ of the algorithm. 

\begin{lemma}\label{correctness in step 2}
If $\lambda=(n-2k+1,1^{2k-1})$ is an even hook and $\nu=(n-2k,1^{2k})$ is the odd hook we arrive at in step  $(2)$ of the algorithm, then $M(\lambda) \geq m(\nu)$. In other words, step $(2)$ is justified. 
\end{lemma}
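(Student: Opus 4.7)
The proof is a direct computation using the closed-form expressions $m(\lambda) = b(\lambda)$ and $M(\lambda) = \binom{n}{2} - b(\lambda')$ from Proposition \ref{Max and Min}; since both shapes here are hooks, their $b$-values are triangular numbers, so the whole inequality collapses to a short elementary bound depending only on $r$ and $k$. I therefore do not expect any real obstacle beyond bookkeeping: the only subtle point is that the restriction $r \geq 2$ is actually needed, which is why the lemma sits in the subsection handling that case and the $r<2$ situations are peeled off separately.

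First I would compute $M(\lambda)$. Since $\lambda=(n-2k+1,1^{2k-1})$, its transpose is $\lambda'=(2k, 1^{n-2k})$, so by the graphical recipe of Remark \ref{graphic blambda} one gets $b(\lambda')=0+1+2+\cdots+(n-2k)=\binom{n-2k+1}{2}$. Substituting $n=2k+r$ and simplifying $\binom{2k+r}{2}-\binom{r+1}{2}$ factors cleanly, giving
\[
M(\lambda)=\binom{2k+r}{2}-\binom{r+1}{2}=(2k-1)(k+r).
\]
Next, $\nu=(n-2k,1^{2k})$ is itself a hook, so $m(\nu)=b(\nu)=0+1+\cdots+2k=\binom{2k+1}{2}=k(2k+1)$.

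Finally, I would verify $(2k-1)(k+r)\geq k(2k+1)$. After expansion, the $2k^2$ terms cancel, and the inequality reduces to $r(2k-1)\geq 2k$. Under the standing hypotheses of this subsection ($r\geq 2$) and the shape constraint that $\lambda$ is genuinely an even hook (forcing $k\geq 1$), this is immediate: $r(2k-1)\geq 2(2k-1)=4k-2\geq 2k$. A quick sanity check shows why $r\geq 2$ cannot be dropped here: for $r=k=1$ the inequality would read $1\geq 2$, and indeed $M((2,1))=2 < 3 = m((1^3))$. Hence the exceptional cases belong in Subsection \ref{special cases}, and for $r\geq 2$ the passage $\lambda \to \nu$ in step $(2)$ of the algorithm satisfies $M(\lambda)\geq m(\nu)$, as required.
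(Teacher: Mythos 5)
Your proposal is correct and follows essentially the same route as the paper: both compute $M(\lambda)=\binom{n}{2}-\binom{r+1}{2}$ and $m(\nu)=\binom{n-r+1}{2}$ via Proposition \ref{Max and Min} and reduce the inequality to an elementary condition equivalent to $(2k-1)(r-1)\geq 1$, which is exactly your $r(2k-1)\geq 2k$ in the $(k,r)$ parametrization. Your closing remark about $r\geq 2$ being necessary matches the paper's deferral of the $r<2$ cases to the special-cases subsection.
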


\begin{proof}

We first note that the number of odd columns has not been changed by the transfer of the square $\lambda_0^*$. 

Let us write $n=2k+r$, so that $\lambda=(r+1,1^{n-r-1})$ and $\nu=(r,1^{n-r})$. We have then $M(\lambda)=\binom{n}{2}-\binom{r+1}{2}$ and $m(\nu) =\binom{n-r+1}{2}$ and we have to show that $\binom{n}{2}-\binom{r+1}{2} \geq \binom{n-r+1}{2}$. 

 Showing this inequality is equivalent to showing that $(n-r)^2+r^2 \leq n^2-2n$ which is equivalent in turn to $(n-r-1)(r-1) \geq 1$ .
 
  The last inequality holds if and only if $r \geq 2$, which is the working assumption of this section
 ($n-r \geq 2$ is obvious since $\lambda$ is an even hook). 
 The case $r<2$ will be discussed in Section \ref{special cases}.

\end{proof}

    \begin{lemma}\label{correctness in step 3}
    If we move a square in the diagram $\lambda$ to get the diagram $\nu$ according to the conditions of step $(3)$ of the algorithm, then $M(\lambda)\geq m(\nu)$, i.e.
step (3) is justified. 
    \end{lemma}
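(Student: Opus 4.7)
The plan is to reduce the inequality $M(\lambda)\ge m(\nu)$ to the simple numerical identity $m(\nu)=m(\lambda)+2$, combined with the gap estimate from Lemma \ref{gap inside one tableau} applied to $\lambda$. This fits the first of the two sufficient conditions (the ``$0\le m(\nu)-m(\lambda)\le 4$'' branch) highlighted in the overview at the beginning of Section \ref{detailed proofs}.

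First I would write out $\nu$ explicitly: the move only changes two parts, with $\nu_i=\lambda_i-1$ and $\nu_{i+2}=\lambda_{i+2}+1$, all other $\nu_j=\lambda_j$. A one-line computation then gives
$$b(\nu)-b(\lambda)=-i+(i+2)=2,$$
and so by Proposition \ref{Max and Min}, $m(\nu)-m(\lambda)=2$. (A small preliminary check is that $\nu$ is still a valid partition, which follows from $\lambda_i-1\ge \lambda_{i+1}$ and $\lambda_{i+2}+1\le \lambda_{i+1}$, both immediate from the strict inequalities $\lambda_i>\lambda_{i+1}>\lambda_{i+2}$.)

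Next I would invoke Lemma \ref{gap inside one tableau} to conclude $M(\lambda)-m(\lambda)\ge 4$. The hypotheses of that lemma are satisfied because the precondition of step (3), $\lambda_i>\lambda_{i+1}>\lambda_{i+2}$, rules out the excluded shapes: a single row $(n)$ or column $(1^n)$ admits no such strict chain, and for a rectangle $(a^b)$ with $b\ge 2$ the only strict descent in the parts occurs at the boundary $\lambda_{b-1}>\lambda_b=0$, which forces $\lambda_b=\lambda_{b+1}$. Combining the two inputs,
$$m(\nu)=m(\lambda)+2\le m(\lambda)+4\le M(\lambda),$$
which is exactly the desired inequality.

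For completeness I would also verify that $\nu$ still lies in $D_n(r)$, i.e.\ that the edit preserves the number of odd columns: it alters the lengths of exactly two columns, column $\lambda_i$ (from $i+1$ to $i$) and column $\lambda_{i+2}+1$ (from $i+2$ to $i+3$). These columns are distinct, since $\lambda_i\ge\lambda_{i+1}+1\ge\lambda_{i+2}+2$, and the two lengths have opposite parities both before and after the move, so the count of odd-length columns is unchanged. The only substantive obstacle is the very small-$n$ range where Lemma \ref{gap inside one tableau} does not apply; but under the standing hypothesis $r\ge 2$ of this subsection the relevant diagrams satisfying the step-(3) precondition automatically have $n\ge 6$, so no additional case analysis is needed here.
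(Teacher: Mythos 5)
Your proposal is correct and follows essentially the same route as the paper: compute $b(\nu)-b(\lambda)=-i+(i+2)=2$, conclude $m(\nu)=m(\lambda)+2$ via Proposition \ref{Max and Min}, and combine this with the gap bound $M(\lambda)-m(\lambda)\ge 4$ from Lemma \ref{gap inside one tableau} exactly as in the discussion opening Section \ref{detailed proofs}. Your additional checks (that $\nu$ is a valid partition, that the odd-column count is preserved, and that the hypotheses of Lemma \ref{gap inside one tableau} are met) are correct and in fact slightly more careful than the paper's own write-up, but they do not change the argument.
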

    
    \begin{proof}
    The length $i+1$ of the column ending with the square $\lambda^*_i$ is reduced by 1, while the length of the column in $\nu$ to which we added the square, increases from $i+1$ to $i+2$, so that an odd column becomes an even column and vice versa. There is no change in any other columns.
    
     Note that for each $j\notin\{i,i+2\}$ we have $\nu_j=\lambda_j$, and also $\nu_i=\lambda_i-1$ and $\nu_{i+2}=\lambda_{i+2}+1$, so we have  
     
     $$b(\nu)=i(\lambda_i-1)+(i+2)(\lambda_{i+2}
+1)+\sum_{j\neq i,i+2}{j\lambda_j}=$$ $$i\lambda_i+(i+2)\lambda_{i+2}+2+\sum_{j \neq i,i+2}j\lambda_j=b(\lambda)+2,$$ so that  $m(\nu)-m(\lambda)=2\le 4$. Hence, by the discussion in the beginning of this section, we conclude that $M(\lambda)\ge m(\nu)$.

\end{proof}

\begin{lemma}\label{correctness in step 4}

If we move a domino in the diagram $\lambda$ to get the diagram $\nu$ according to the conditions of either one of the two cases of step $(4)$ of the algorithm, then we have $M(\lambda)\geq m(\nu)$, i.e.
step $(4)$ is justified. 
\end{lemma}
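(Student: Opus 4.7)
The plan is to dispatch cases (a) and (b) of step $(4)$ independently, and in each case to verify one of the two sufficient conditions from the preamble to this section --- either $m(\nu)-m(\lambda)\le 4$ or $M(\nu)-M(\lambda)=2$ --- which, combined with Lemma \ref{gap inside one tableau}, yield $M(\lambda)\ge m(\nu)$. I also need to confirm $\nu\in D_n(r)$, but this is automatic because the move alters exactly two column lengths, each by $\pm 2$, so the parity of every column length is preserved.

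For case (b), in which $\lambda_i-\lambda_{i+1}\ge 2$, the move changes only $\nu_{i-1}=\nu_i=\lambda_i-1$ and $\nu_{i+1}=\nu_{i+2}=\lambda_{i+1}+1$; the inequality $\lambda_i-\lambda_{i+1}\ge 2$ is precisely what guarantees $\nu_i\ge\nu_{i+1}$, so $\nu$ is still a partition. A one-line computation gives
\[
b(\nu)-b(\lambda)\;=\;-(i-1)\;-\;i\;+\;(i+1)\;+\;(i+2)\;=\;4,
\]
so $m(\nu)-m(\lambda)=4$. Since $\lambda$ has a row of length $\ge 2$ and at least two non-empty rows, $\lambda\notin\{(n),(1^n)\}$, and Lemma \ref{gap inside one tableau} delivers $M(\lambda)\ge m(\lambda)+4=m(\nu)$.

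For case (a), where $\lambda_i=2$ and the domino is relocated to the end of the first column, it is cleaner to track $b(\lambda')$ instead. Only columns $0$ and $1$ (in the $0$-indexed convention) change length: column $0$ grows by $2$ and column $1$ shrinks by $2$. Hence
\[
b(\nu')-b(\lambda')\;=\;0\cdot 2\;+\;1\cdot(-2)\;=\;-2,
\]
so $M(\nu)=M(\lambda)+2$. Provided $\nu\notin\{(n),(1^n)\}$, Lemma \ref{gap inside one tableau} gives $M(\nu)-m(\nu)\ge 4$, and therefore $M(\lambda)=M(\nu)-2\ge m(\nu)+2\ge m(\nu)$.

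The only delicate point in the whole argument is excluding $\nu=(1^n)$ in case (a). The column at position $1$ of $\nu$ inherits length $i-1$ from $\lambda$, so $\nu=(1^n)$ forces $i=1$; but then $\lambda=(2,2,1,\dots,1)$ is a two-column shape whose second column has the even length $2$, giving at most one odd column and contradicting the standing assumption $r\ge 2$ of this part of Section \ref{detailed proofs}. The case $\nu=(n)$ is immediate because $\nu$ has at least $4$ rows. This is the only place the hypothesis $r\ge 2$ is invoked, and the residual cases $r\in\{0,1\}$ are deferred to Subsection \ref{special cases}.
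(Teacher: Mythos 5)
Your proof is correct and follows essentially the same route as the paper's: case (b) via $b(\nu)-b(\lambda)=4$, case (a) via $b(\lambda')-b(\nu')=2$, then the preamble's criteria together with Lemma \ref{gap inside one tableau}. Your extra care in excluding $\nu=(1^n)$ (which forces $\lambda=(2^2,1^{n-4})$ and hence $r\le 1$) is exactly the edge case the paper defers to Subsection \ref{special cases}, so nothing is missing.
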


\begin{proof}

Since we move two squares from one column to another, the number of odd columns is not changed in the passage from $\lambda$ to $\nu$. 

We show now that in case (a) we have $0 \leq M(\nu) - M(\lambda) \leq 4$, while in case (b) we have $0 \leq m(\nu)-m(\lambda) \leq 4$ .

\begin{itemize}
\item In case (a) we have:
$$M(\nu)-M(\lambda)={n \choose 2}-b(\nu')-\left({n \choose 2}-b(\lambda')\right)=b(\lambda')-b(\nu').$$ 
Since $\lambda_{i-1}^*=\lambda_i^*=2$, the domino is moved from the second column to the first one and thus the columns sum of $\lambda$ is decreased by $2$, while the columns sum of $\nu$ gains nothing, so that $b(\lambda')-b(\nu')=2$ (see Remark \ref{graphic blambda}). 

\item In case (b), we have:

$$b(\nu)=(i-1)(\lambda_{i-1}-1)+i(\lambda_i-1)+(i+1)(\lambda_{i+1}+1)+(i+2)(\lambda_{i+2}+1)+\sum_{j<i-1}{j\cdot \lambda_j}+\sum_{i+2<j}{j\cdot \lambda_j}=$$

$$(i-1)\lambda_{i-1}+i\lambda_i+(i+1)\lambda_{i+1}+(i+2)\lambda_{i+2}+4+\sum_{j<i-1}{j\cdot \lambda_j}+\sum_{i+2<j}{j\cdot \lambda_j}=b(\lambda)+4$$

so that $m(\nu)-m(\lambda)=b(\nu)-b(\lambda)=4$.
\end{itemize}
\end{proof}
\subsection{Special cases}\label{special cases}
We deal now with the case $r \leq 1$. A problem might emerge if we arrive at the diagram $\nu=(1^{n})$ for $n$ odd, in which case, the inequality $M(\lambda)\geq m(\nu)$ does not hold (see for example the passage from $\lambda^7$ to $\lambda^8$ in Example \ref{main example}). There are two possibilities of diagrams $\lambda$ from which we might arrive at such a situation.  

\begin{enumerate}
    \item[(a)] $\lambda=(2,1^{n-2})$.  This case might appear in step $(2)$ of the algorithm, where Lemma \ref{correctness in step 2} will not hold true since $M(\lambda)={n \choose 2} -b(\lambda')={n \choose 2}-1$, while $m(\nu)={n \choose 2}$. 
     Nevertheless, this does not harm the continuity of the major index.
    
    \item[(b)] $\lambda=(2^2,1^{n-4})$. This can happen in the first case of step $(4)$ of the algorithm.  
    
    In this case, we have $M(\lambda)={n \choose 2}-2$, while $m(\nu)={n \choose 2}$ (although we do have $M(\nu)-M(\lambda)=2$, the implication that $M(\lambda) \geq m(\nu)$ which is mentioned in the parenthesized paragraph before Fig. \ref{diagrams} does not hold, since the use of Lemma \ref{gap inside one tableau} with respect to $\nu$ is not allowed there). 
    
    To complete this gap, we deviate slightly from the algorithm: instead of removing the domino in the right, we remove only the bottom square of the domino, to get $\nu'=(2,1^{n-2})$ in which $M(\lambda)-M(\nu')={n \choose 2}-\left({n \choose 2}-1\right)=1$  (note that indeed $\nu' \in D_n(1)$ since $n=2k+1$ is odd). The passage from $\nu'$ to $\nu=(1^n)$ is identical to the one described in the former case (a) (see figure \ref{last example})
\end{enumerate}

\begin{figure}[!ht]
\begin{center}
$\lambda=$ \begin{ytableau}
 {}  & {}  \\
  {}  & {*} \\
  {} \\
  {}\\
  {}
\end{ytableau}
$\rightarrow$
$\nu'=$ \begin{ytableau}
 {} & {*}   \\
  {}  \\
  {}  \\
  {} \\
  {}\\
  {}
\end{ytableau}
$\rightarrow$
$\nu=$ \begin{ytableau}
 {}   \\
  {}  \\
  {} \\
  {} \\
  {}   \\
  {}  \\
  {}
\end{ytableau}
\caption{\label{last example}}
\end{center}
\end{figure}

We get now to the case $r=0$, here we have two types of problems.
\begin{enumerate}
    \item 
    If $\lambda=(2^2,1^{n-4})$ and $\nu=(1^n)$, then we have $M(\lambda)={n \choose 2}-2$, 
    while $m(\nu)={n \choose 2}$. The value ${n \choose 2}-1$ does not appear, in accordance with the claim of Theorem \ref{main theorem itself}. Actually, the value ${n \choose 2}-1$ is not obtained at all due to Lemma \ref{missing}.  
    
    \item If $\lambda^0=(k,k)$ which by Lemma \ref{min and max on $D_n(r)$}.1 obtains the minimum value of major index, then since $\lambda^0$ is a rectangle, by Proposition \ref{Max and Min}, $m(\lambda^0)+1$ is not obtained. Again, this is in accordance with the claim of Theorem \ref{main theorem itself}. Note that this value will not be obtained elsewhere by Lemma \ref{missing}.    
    
  \end{enumerate}

\section*{Acknowledgements}
The results of this paper are part of the Ph.D thesis of the second author, supervised by Ron Adin and Yuval Roichman.
We thank both of them for fruitful conversations.

\end{document}